\newtheorem{example}[equation]{Example}
\newtheorem{theorem}[equation]{Theorem}
\newtheorem{lemma}[equation]{Lemma}
\newtheorem{remark}[equation]{Remark}
\numberwithin{equation}{section}
\newcommand{\pr}{\prime}
\newcommand{\wh}{\widehat}
\newcommand{\Ctd}{\hbox{\text{\rm Ctd}}}
\newcommand{\ol}{\overline}
\newcommand{\Z}{\mathbb{Z}}
\newcommand{\fg}{{\mathfrak g}}
\newcommand{\autfun}{{\bf Aut}}
\newcommand{\qed}{\hfill $\Box$}
\newcommand{\End}{\hbox{End}}
\newcommand{\C}{{\mathbb C}}
\newcommand{\cL}{{\mathcal L}}
\newcommand{\Aut}{{\rm Aut}}
\newcommand{\Hom}{{\rm Hom}}
\newcommand{\id}{{\rm id}}
\newcommand{\kalg}{{k\text{\rm -alg}}}
\newcommand\Spec{\text{\rm Spec}}
\newcommand\os{\overset}
\newcommand\us{\underset}
\newcommand\et{\text{\rm \'et}}
\newcommand\q{\quad}
\newcommand{\bsig}{{\pmb \sigma  }}
\newcommand{\Der}{{\rm Der}}
\newcommand{\ad}{{\rm ad}}
\title{Derivations of certain  algebras defined by
\'etale descent}
\author{
Arturo Pianzola$\hbox{\,}^{1,2}$\thanks{The author gratefully
acknowledges the support of the Natural Sciences and Engineering
Research Council of Canada, and of CONICET.} \vspace{0.3cm}\\
$\hbox{\ \,}^1${\small University of Alberta, Department of
Mathematical and Statistical Sciences,}\\{\small Edmonton, Alberta,
Canada T6G 2G1}\\
$\hbox{\ \,}^2${\small Instituto Argentino de Matem\'atica, Saavedra
15, (1083) Buenos Aires, Argentina.} \\{\small Email:\
a.pianzola@math.ualberta.ca}}
\date{}
\begin{document}
\maketitle
\begin{abstract}
 \noindent We give an explicit description of the Lie algebra of derivations
 for a class of infinite dimensional algebras which are given by \'etale
descent. The algebras under consideration are twisted forms of
 central algebras over rings, and include
 the multiloop algebras that appear in
 the construction of extended affine Lie algebras.\\
{\em Keywords:} Twisted form, derivation, centroid, \'etale descent, multiloop algebra.  \\
{\em MSC 2000} Primary 17B67. Secondary 17B01.
\end{abstract}

\vskip.25truein \section{Introduction} Many interesting  infinite
dimensional Lie algebras can be thought as being ``finite
dimensional'' when viewed, not as algebras over the given base
field, but rather as algebras over their centroids. From this point
of view, the algebras in question look like twisted forms of simpler
objects with which one is familiar. The quintessential example of
this type of behaviour is given by the affine Kac-Moody Lie
algebras.

An affine Kac-Moody Lie algebra $\cL$ (derived modulo its centre)
has centroid $R=\C[t^{\pm 1}],$ and there exists a unique finite
dimensional simple Lie algebra ${\fg}$ (whose type is called the
absolute type of $\cL)$ such that
$$
\cL\otimes_R R'\simeq (\fg \otimes_{\C} R)\otimes _R R'
$$
with $R\to R'$ faithfully flat and \'etale (one can in fact take
$R'=\C [t^{\pm 1/m}]$ for a suitable $m\ge 1).$ In other words, as
$R$-algebras, $\cL$ and $\fg \otimes_{\C}R$ are locally isomorphic for
the \'etale topology on $\Spec(R).$  Since ${\bf Aut}(\fg)$ is
smooth, Grothendieck's descent theory allows us to compute the
isomorphism  classes of such algebras by means of the pointed sets
$H^1_{\et}\big(R,\text{\bf Aut}(\fg)\big).$ In fact, as we vary $\fg$
over the nine Cartan-Killing types $A_\ell,B_\ell,\dots ,E_8$ we
obtain 16 classes in the resulting $H^1_{\et},$ and these correspond
precisely to the isomorphism classes of the affine algebras
\cite{P1}.

Extended affine Lie algebras (EALAs for short) are natural and
rather elegant ``higher nullity" analogues of the affine algebras
(see \cite{AABGP}, \cite{N1}, and \cite{N2} for details. For a
beautiful survey of the theory of EALAs, see \cite{N3}). A
reasonable understanding of how these algebras fit within the
cohomological language of forms is now beginning to emerge
(\cite{ABFP1}, \cite{ABFP2}, \cite{GP1}, \cite{GP2} and \cite{P1}).

Both the affine algebras and their EALA descendants have connections
with Physics, and it is here where central extensions play a crucial
role. In the affine case for example, it is not the complex Lie
algebra  $\cL$ that is of interest to physicists,  but rather its
one-dimensional universal central extension $\wh{\cL} = \cL \oplus
\C c.$ This presents an interesting duality: $\cL$ can be viewed as
a twisted form when thought as an algebra over $R=\C [t^{\pm1}]$, but
not when viewed as a complex Lie algebra. By contrast, as an
$R$-algebra, $\cL$ is centrally closed, but as a $\C$-algebra it is
not. The relevant central extension $\wh{\cL} = \cL\oplus \C c $
exists over $\C$ but not over $R.$\footnote{ In fact, $\wh{\cL} $ is
not even an $R$-algebra in any meaningful way.}

It is thus somehow surprising that natural central extensions of
twisted forms of Lie algebras can be obtained solely from their
defining descent data \cite{PPS}.\footnote{Some interesting
difficulties arise from the fact that  central extensions bring
cyclic homology into the picture, but there is no \'etale descent
for cyclic homology.} What is missing from this natural descent
construction of central extensions is a good understanding of when
they are universal.  This brings us to the current work of E.~Neher.

Just as the affine algebras are built out of loop algebras by adding central extensions and 
derivations, Neher has shown how to build
EALAs  out of basic objects called Lie tori \cite{N1}
\cite{N2}.\footnote{The multiloop algebras appear as the centreless
cores  of the EALAs that one is trying to build. They are Lie tori
as well as a very special type of twisted form. It is in this way
that the connection between EALAs and Galois cohomology emerges.}
Furthermore, in work in progress \cite{N4}, Neher has also shown how
to relate his construction of (universal) central extensions to the
one given by descent in \cite{PPS}. To do this, however, a
particular explicit description of the algebra of derivations of
multiloop algebras is needed.

The structure of the algebra of derivations of a multiloop algebra
has recently been determined by S. Azam \cite{A}. Azam's proof,
which is motivated by earlier work of Benkart and Moody \cite{BM},
is rather involved and depends on a delicate induction reasoning. In
this note we give an explicit description of the algebra of
derivations for a large class of algebras defined by \'etale
descent. Our methods are quite transparent and, when applied to the
particular case of multiloop algebras, yield a new concise and
conceptual proof of the Benkart-Moody-Azam result.

\section{Centroids of algebras and their derivations}

Throughout this note $k$ will denote a ring (commutative and unital)
and $k$-alg the category of associative commutative and unital
$k$-algebras.  Fix an object $R$ of $\kalg.$

Let $\cL$ be an $R$-algebra (not necessarily associative,
commutative or unital). Recall that the {\it centroid} of $\cL$
consists of the endomorphisms of the $R$-module $\cL$ that commute
with left and right multiplication by elements of $\cL.$ That is,
$$
\Ctd_R(\cL) =\{\chi  \in \,\End_R(\cL):\chi  (xy) =\chi  (x)y =
x\chi  (y) \, \forall x,y \in \cL.\}
$$
for all $x,y \in \cL.$ The centroid is a subalgebra of the (associative and unital)
$R$-algebra $\End_R(\cL).$  For each $r\in R$ the homothety $\chi
_r:x\mapsto rx$ belongs to $\Ctd_R(\cL).$  This yields an
$R$-algebra homomorphism
\begin{equation}\label{chiR}
\chi _{_{\cL, R}}:R\to \Ctd_R(\cL)
\end{equation}
which is injective if and only if $\cL$ is faithful. Recall that
$\cL$ is called {\it central} if the map $\chi _{_{\cL, R}}$ is an
isomorphism, and {\it perfect} if $\cL$ is spanned as a $k$-module
(in fact as an abelian group) by the set $\{xy:x,y\in \cL\}.$  By
restriction of scalars we can view $\cL$ also as a $k$-algebra.  At
the centroid level, this yields the (in general proper) inclusion
\begin{equation}\label{centroidinclusion}
\Ctd_R(\cL) \subset \;\Ctd_k(\cL).
\end{equation}
Perfectness, on the other hand, is independent of whether we view
$\cL$ as an algebra over $R$ or $k.$

 For convenience we recall the following simple yet useful
fact (see \cite{J}, \S4 of \cite{ABP2} and  \cite{BN} for details
and more general results on centroids).

\begin{lemma}\label{perfect}
If $\cL$ is perfect the centroid $\Ctd_R(\cL)$ is commutative and
the inclusion $\Ctd_R(\cL)\subset \;\Ctd_k(\cL)$ is an equality.
\qed
\end{lemma}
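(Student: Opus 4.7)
The proof should fall out essentially from the two defining identities $\chi(xy)=\chi(x)y=x\chi(y)$ together with the fact that perfectness is insensitive to the scalar ring. My plan is to deduce commutativity first, and then to leverage it to collapse the inclusion $\Ctd_R(\cL)\subset \Ctd_k(\cL)$.

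For commutativity, take $\chi,\psi\in \Ctd_R(\cL)$ and any $x,y\in\cL$. Using $\psi(xy)=x\psi(y)$ followed by $\chi(x\cdot-)=x\chi(-)$, one gets $\chi\psi(xy)=\chi(x)\psi(y)$. Symmetrically, using $\chi(xy)=\chi(x)y$ and then $\psi(-\cdot y)=\psi(-)y$ (via the other identity applied to $\chi(x)\cdot y$), one gets $\psi\chi(xy)=\chi(x)\psi(y)$ as well. So $\chi\psi$ and $\psi\chi$ agree on every product $xy$, and since $\cL$ is perfect these products span $\cL$ as an abelian group, whence $\chi\psi=\psi\chi$.

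For the equality of the centroids, observe that the argument above used nothing about the base ring beyond the two centroid identities; in particular, the identical reasoning applies verbatim to $\Ctd_k(\cL)$, which is therefore commutative as well (recall perfectness is independent of whether $\cL$ is viewed over $R$ or $k$). Now take any $\chi\in \Ctd_k(\cL)$. For each $r\in R$ the homothety $\chi_r$ lies in $\Ctd_R(\cL)$, and hence in $\Ctd_k(\cL)$ via the inclusion (\ref{centroidinclusion}). By the commutativity of $\Ctd_k(\cL)$, $\chi\chi_r=\chi_r\chi$, which evaluated on $x\in\cL$ reads $\chi(rx)=r\chi(x)$. Combined with the $k$-linearity of $\chi$ this shows $\chi$ is $R$-linear, and so $\chi\in \Ctd_R(\cL)$.

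There is no real obstacle here; the only conceptual point worth flagging is the observation that the commutativity argument applies at the $k$-level precisely because perfectness is a statement about the abelian group span of products and does not care about the coefficient ring. That small remark is what makes the homothety trick work to promote $k$-linear centroidal endomorphisms to $R$-linear ones.
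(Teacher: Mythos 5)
Your proof is correct; the paper itself gives no argument for this lemma (it is stated with \qed and a pointer to \cite{J}, \S4 of \cite{ABP2} and \cite{BN}), and what you have written is precisely the standard argument from those sources: both $\chi\psi$ and $\psi\chi$ collapse to $x\otimes y\mapsto \chi(x)\psi(y)$ on products, perfectness spreads this to all of $\cL$, and commuting a $k$-centroidal $\chi$ past the homotheties $\chi_r$ forces $R$-linearity. Your flagged point --- that the commutativity argument is purely about the additive span of products and hence applies equally at the $k$-level --- is exactly the observation that makes the second claim follow from the first.
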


This will be the situation that will be considered in our work. In
particular $\Ctd_R(\cL)$ is an object of $k$-alg and $\cL$ can
naturally be viewed as an algebra over the (commutative) ring
$\Ctd_R(\cL).$
\bigskip

We finish this section by describing the main problem that we want
to study. By restriction of scalars we may view $\cL$ as a
$k$-algebra. We then have a natural $k$-Lie algebra homomorphism $
\eta_{\cL} : \Der_k(\cL)\to \Der_k\big(\Ctd_k(\cL)\big)$ given by
\begin{equation}\label{derivationmap}
\eta_{\cL}(\delta)(\chi  ) = [\delta,\chi ] = \delta \circ \chi
-\chi \circ \delta
\end{equation}
for all $\delta \in \Der_k(\cL)$ and $\chi \in \Ctd_k(\cL).$
 Assume,
furthermore, that  $\cL$ is such that the natural map
\begin{equation}\label{chik}
\chi_{\cL} : R \os {\chi _{_{\cL, R}}} \longrightarrow \Ctd_R(\cL)
\subset \Ctd_k(\cL)
\end{equation}
 obtained by composing (\ref{chiR}) and (\ref{centroidinclusion}) is
an isomorphism. Then $\eta_{\cL}$ induces a $k$-Lie algebra
homomorphism (also denoted $\eta_{\cL}$)
\begin{equation}\label{natural}
\eta_{\cL}  :\,\Der_k(\cL) \to \Der_k(R).
\end{equation}

For future reference let us observe
that the isomorphism $\chi_{\cL} : R \to \Ctd_k(\cL)$ under
consideration is given by $r\mapsto \chi _r$ where $\chi _r:x\mapsto
rx.$  Thus for $\delta \in \,\Der_k(\cL)$ our map (\ref{natural}) is
determined by the identity
\begin{equation}\label{meta}
\eta_{\cL} (\delta)(r) = t\Longleftrightarrow [\delta,\chi _r]
=\chi_t \, \, \text{for all} \, \, r,t \in R.
\end{equation}

One could say that the main objective of our work is to identify a
useful class of algebras for which the map $\eta_{\cL}$ is well
understood. In \S\ref{stwisted} we will discuss a class of algebras
for which the map $\chi_{\cL} : R \to\Ctd_k(\cL)$ is an isomorphism,
while in \S\ref{sdescent} we describe a class of algebras for which
the map $\eta_L$  has a natural section. This leads to an explicit
description of the algebra of derivations of the $k$-algebra $\cL.$
Finally, in \S\ref{smultiloop} we apply the results of the two
previous sections to study the case of multiloop algebras in detail.

\section{Twisted forms of algebras}\label{stwisted}

In what follows $A$ will denote an algebra (not necessarily
associative...) over $k.$ For each object $S$ in $\kalg$ we will
find it at times convenient to denote the resulting $S$-algebra $A
\otimes_k S$ by $A_S.$

\begin{lemma}\label{centroid}
Assume that the $k$-algebra $A$ is finitely presented as a $k$-module,
and that $k\to R$ is flat.  Then the canonical map
$$
\nu_{A, k, R}  :\,\Ctd_k(A)\otimes_k R \to \;\Ctd_R(A\otimes_k R) =
\Ctd_R(A_R)
$$
is an $R$-algebra isomorphism.\footnote{The assumptions we have made
on $A$ and $R$ are natural within the context of the present work,
but the main example that we have in mind is of course when $k$ is a
field and $A$ is finite dimensional. In the case when $k$ is a
field, many other examples when the map $\nu_{A,k,R}$ is an
isomorphism can be found in \cite{A}, \cite{ABP2.5} and \cite{BN}.}
\end{lemma}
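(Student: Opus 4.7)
The plan is to realize $\Ctd_k(A)$ as the kernel of a $k$-linear map built out of hom-spaces, and then transport this kernel description across the base change $k\to R$ using flatness plus the finite-presentation hypothesis.

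\medskip

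\noindent\textbf{Step 1 (Kernel description).} I would first write the centroid as a kernel. Consider the two $k$-linear maps
\[
L, M : \End_k(A) \longrightarrow \Hom_k(A\otimes_k A,\, A),
\]
defined by $L(\chi)(x\otimes y) = \chi(xy) - \chi(x)y$ and $M(\chi)(x\otimes y) = \chi(xy) - x\chi(y)$. Then directly from the definition,
\[
0 \longrightarrow \Ctd_k(A) \longrightarrow \End_k(A) \os{L\oplus M}{\longrightarrow} \Hom_k(A\otimes_k A,\, A)^{\oplus 2}
\]
is exact, and the analogous statement holds for $A_R$ over $R$.

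\medskip

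\noindent\textbf{Step 2 (Base change of the kernel).} Tensoring the above exact sequence with $R$ over $k$ and invoking flatness of $k\to R$, I get an exact sequence
\[
0 \longrightarrow \Ctd_k(A)\otimes_k R \longrightarrow \End_k(A)\otimes_k R \longrightarrow \Hom_k(A\otimes_k A,\, A)^{\oplus 2}\otimes_k R.
\]
So it suffices to identify the latter two terms with $\End_R(A_R)$ and $\Hom_R(A_R\otimes_R A_R,\,A_R)^{\oplus 2}$ respectively, compatibly with $L\oplus M$.

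\medskip

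\noindent\textbf{Step 3 (Hom commutes with base change).} Here I use the finite presentation hypothesis. The standard fact is that for any finitely presented $k$-module $P$, any $k$-module $N$, and any flat $k$-algebra $R$, the canonical map
\[
\Hom_k(P,N)\otimes_k R \;\longrightarrow\; \Hom_R(P\otimes_k R,\, N\otimes_k R)
\]
is an isomorphism (proved by a five-lemma argument from a finite presentation $k^m\to k^n\to P\to 0$, starting from the trivial case $P=k^n$). Applying this with $P=A$ gives $\End_k(A)\otimes_k R\cong \End_R(A_R)$. Since tensor products of finitely presented modules are finitely presented, $A\otimes_k A$ is finitely presented as well, and applying the same fact with $P=A\otimes_k A$ and $N=A$ gives $\Hom_k(A\otimes_k A,A)\otimes_k R\cong \Hom_R(A_R\otimes_R A_R,\, A_R)$ (using $(A\otimes_k A)\otimes_k R = A_R\otimes_R A_R$).

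\medskip

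\noindent\textbf{Step 4 (Conclusion).} Under these identifications the base-changed map $(L\oplus M)\otimes\id_R$ matches the analogous map for $A_R$, so its kernel $\Ctd_k(A)\otimes_k R$ is identified with $\Ctd_R(A_R)$. The resulting isomorphism is visibly the canonical map $\nu_{A,k,R}$, and it respects composition (hence the $R$-algebra structure) because composition of endomorphisms is compatible with each of the identifications above. The main (very mild) obstacle is just bookkeeping: checking that the identification $\End_k(A)\otimes_k R \cong \End_R(A_R)$ is a ring isomorphism and that the kernel description is compatible with multiplication on both sides; this is routine but is where the finite presentation of $A$ (to ensure $A\otimes_k A$ is also finitely presented) is genuinely used.
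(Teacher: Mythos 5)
Your proof is correct and follows essentially the same route as the paper's: both realize the centroid as the kernel of a map $\End_k(A)\to\Hom_k(A\otimes_k A,\,A\oplus A)$, tensor with $R$ using flatness of $k\to R$, and use finite presentation of $A$ to identify $\End_k(A)\otimes_k R$ with $\End_R(A_R)$. The only (harmless) difference is that you establish the right-hand comparison map as an isomorphism via finite presentation of $A\otimes_k A$, whereas the paper only needs its injectivity, which already follows from $A\otimes_k A$ being of finite type.
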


\begin{proof}
The map in question is the restriction to $\Ctd_k(A)\otimes_k R$ of
the canonical map $ \End_k(A)\otimes_k R\to \;\End_R(A\otimes_k R) =
\End_R(A_R).$ Let
$$
\beta  = \beta  _{A,k}:\,\End_k(A) \to \;\Hom_k(A\otimes_k A,
A\oplus A)
$$
be the unique $k$-linear map satisfying
$$
\beta_{A,k}  (f)(a_1\otimes a_2) = \big(f(a_1a_2) -f (a_1)a_2,
f(a_1a_2) - a_1f(a_2)\big).
$$
By definition $\Ctd_k(A) = \ker(\beta  _{A,k}).$  We have the
commutative diagram

$$\scriptsize{
\begin{array}{cccccl}
0 \longrightarrow &\Ctd_k(A)\otimes_k R &\longrightarrow
&\End_k(A)\otimes_k R &\os{\beta _{A,k} \otimes 1}\longrightarrow
&\Hom_k(A\otimes _k
A,A\oplus A) \otimes_k R\\
&\downarrow\nu_{A, k, R}   &&\downarrow  &&\q\q\q\q\q\q\q \downarrow  \\
0 \longrightarrow &\Ctd_R(A_R) &\longrightarrow &\End_R(A_R)
&\os{\beta  _{A_R, R}}\longrightarrow &\Hom_R(A_R\otimes_R A_R, A_R
\oplus A_R).
\end{array} }
$$
The top row is exact because $k \to R$ is flat, the middle vertical
arrow is bijective because $A$ is finitely presented, while the
right vertical map is injective because $A \otimes_k A$ is of finite
type \cite[Ch.~1, \S2.10, Prop. 11]{Bbk}. From this it readily
follows that $\nu_{A,k,R}$ is an isomorphism. \qed

\bigskip

 The $k$-group
functor of automorphisms of $A$ will be denoted by $\autfun(A).$
Thus
$$
\begin{aligned}
\autfun(A): & \,\,\kalg \to \;{\rm Grp}\\
& \,\, S \mapsto \autfun(A)(S) = {\rm Aut}_{S\text{\rm -alg}}(A_S),
\end{aligned}
$$
where this last is the group of automorphisms of the $S$-algebra
$A_S.$ If as a $k$-module $A$ is projective of finite type, then
$\autfun(A)$ is an affine group scheme over $\Spec(k).$

Recall that  a {\it twisted form} of the $R$-algebra $A_R$ for the
fpqc topology on $\Spec(R)$ is an $R$-algebra $\cL$ such that
\begin{equation}\label{trivialize}
\cL \otimes_R R^\pr \simeq_{R^\pr\text{\rm -alg}}\;A_R\otimes _R
R^\pr
\end{equation}
for some faithfully flat  extension $R\to R^\pr$. Given a form $\cL$
as above for which (\ref{trivialize}) holds, we say that $\cL$ is
{\it trivialized} by $R^\pr.$  The $R$-isomorphism classes of such
algebras can be computed by means of cocycles, just as one does in
Galois cohomology \cite{Se}:
\begin{equation}\label{correspondence}
 \text{\it Isomorphism classes of}\, R^\pr/R\text{\it -forms of}\; A_R
\longleftrightarrow H^1\big(R'/R,\autfun(A)\big).
\end{equation}
 Since we will need the explicit
description of this correspondence for our work, we will briefly
recall the basic relevant facts.\footnote{ The general theory of
descent we are using can be found within \cite{SGA1} and
\cite{SGA3}. The formalism of torsors is clearly presented in
\cite{DG} and \cite{M}. An accessible account that is (almost)
sufficient for our needs can be found in \cite{KO} and \cite{Wth} .}
Let $R^{\pr\pr} = R^\pr \otimes _R R^\pr$ and $R^{\pr\pr\pr} = R^\pr
\otimes _R R^\pr \otimes_R R^\pr.$ We have the standard $R$-algebra
homomorphisms $p_i:R^\pr \to R^{\pr\pr},$ $i=1,2$ and $p_{ij}:
R^{\pr\pr}\to R^{\pr\pr\pr},$ $1\le i <j\le 3$ corresponding to the
projections on the $i$-th and $(i,j)$-th components respectively (see \cite{Wth} \S 17.6 for details).
These yield group homomorphisms (also denoted by $p_i$ and $p_{ij})$
$$
\begin{aligned}
\autfun (A)(R^\pr) &\os{p_i}\longrightarrow
\,\autfun(A)(R^{\pr\pr}),\\
\autfun(A)(R^{\pr\pr}) &\os{p_{ij}}\longrightarrow
\,\autfun(A)(R^{\pr\pr\pr}).
\end{aligned}
$$
An $R^\pr/R${\it -cocycle with values in} $\autfun (A)$ is an
element $u\in \,\autfun(A)(R^{\pr \pr})$ such that $p_{13}(u) =
p_{23}(u)p_{12}(u).$  If $g\in\,\autfun (A)(R^\pr)$ then $g \cdot u=
p_2(g)u p_1(g)^{-1}$ is also a cocycle. This defines an action of
the group $\autfun(A)(R^{\pr})$  on the set of cocycles, and we
define $H^1\big(R^\pr/R,\autfun(A)\big)$ to be the quotient set
(whose elements are thus equivalence classes of cocycles) defined by
this action. $H^1\big(R^\pr/R,\autfun(A)\big)$ is a pointed set
whose distinguished element is the class of the cocycle $1 \in
\autfun(A)(R^{\pr \pr}).$

The correspondence (\ref{correspondence}) is given by attaching to a
cocycle $u$ the $R$-algebra
$$
\cL_u =\{x\in A\otimes_k R^\pr:up^A_1(x) = p^A_2(x)\}
$$
where  $p^A_i = \id_A \otimes p_i : A \otimes_k  R' \to A \otimes_k
R' \otimes_R R'.$ If $\mu :R^\pr \otimes_R R^\pr\to R^\pr$ is the
map corresponding to the multiplication of the ring $R'$, then the
diagram
$$
\begin{array}{clc}
A\otimes_k R^\pr \otimes_R R^\pr\\
&\searrow^{ ^{1\otimes \mu}}  \\
\bigcup&&A\otimes_k R^\pr\\
&\nearrow\\
\cL_u \otimes_R R^\pr
\end{array}
$$
induces an isomorphism $\cL_u\otimes_R R^\pr \simeq A\otimes_k
R^\pr.$
\end{proof}

\begin{lemma}\label{basic}
Let $\cL$ be a twisted form of $A_R$ for the fpqc topology on
$\Spec(R).$ Assume  $A$ is perfect and central as a $k$-algebra, and
finitely presented as a $k$-module. Then

\begin{description}
\item  {\rm (1)} $\cL$ is perfect.  In particular $\Ctd_R(\cL)$ is
commutative and the inclusion $\Ctd_R(\cL) \subset \,\Ctd_k(\cL)$ is
an equality.
\item  {\rm (2)} As an $R$-module $\cL$ is faithful and finitely presented.
\item  {\rm (3)} The canonical map $ \chi_{\cL} : R \to \Ctd_k(\cL)$ is
an isomorphism.
\end{description}
\end{lemma}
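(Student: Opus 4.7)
The plan is to fix a faithfully flat cover $R \to R'$ trivializing $\cL$, so that $\cL_{R'} := \cL \otimes_R R' \simeq A \otimes_k R'$ as $R'$-algebras, and then play fpqc descent against Lemma \ref{perfect} and Lemma \ref{centroid}.

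For (1), I would introduce the $k$-submodule $I \subseteq \cL$ spanned by the products $\{xy : x, y \in \cL\}$. The identity $r(xy) = (rx)y$ shows $I$ is in fact an $R$-submodule, and flatness of $R \to R'$ gives an injection $I \otimes_R R' \hookrightarrow \cL_{R'}$ whose image is precisely the $R'$-span of the products in $\cL_{R'}$. Perfectness of $A$ (inherited immediately by $A \otimes_k R'$) transports through $\cL_{R'} \simeq A_{R'}$ to give $I \otimes_R R' = \cL_{R'}$, hence $(\cL/I) \otimes_R R' = 0$, and faithful flatness forces $\cL = I$. Lemma \ref{perfect} then supplies the remaining assertions of (1). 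The finite presentation half of (2) is now cheap: $A$ finitely presented over $k$ makes $A \otimes_k R'$ finitely presented over $R'$, and finite presentation of modules is fpqc-local.

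For (3), with $\cL$ now known to be finitely presented, Lemma \ref{centroid} applied to $\cL$ along $R \to R'$ yields $\Ctd_R(\cL) \otimes_R R' \simeq \Ctd_{R'}(\cL_{R'}) \simeq \Ctd_{R'}(A_{R'})$, while a second application of Lemma \ref{centroid} to $A$ along $k \to R'$, together with the centrality of $A$ over $k$, gives $\Ctd_{R'}(A_{R'}) \simeq \Ctd_k(A) \otimes_k R' \simeq R'$. Concatenating these identifications and tracking the homothety maps shows that the base change of $\chi_{\cL,R}$ to $R'$ is the identity map $R' \to R'$, so fpqc descent promotes $\chi_{\cL,R}$ itself to an isomorphism. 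Combined with (1) this gives (3), and as a by-product the homothety map $R \to \End_R(\cL)$ is injective, which is the faithfulness statement still owed in (2).

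The delicate point will be the naturality check inside (3): one must verify that the two applications of Lemma \ref{centroid} intertwine the respective homothety maps, so that the descent argument genuinely promotes local centrality to global centrality rather than merely matching abstract $R'$-modules. The tensoring of $A$ by $R'$ over $k$ also implicitly uses flatness of $k \to R'$, which here follows from flatness of $k \to R$ composed with the faithfully flat $R \to R'$.
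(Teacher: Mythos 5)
Your proposal is correct and follows essentially the same route as the paper: descend perfectness and finite presentation of $\cL$ along the faithfully flat trivializing cover $R \to R'$, then combine two applications of Lemma \ref{centroid} (to $\cL$ over $R \to R'$ and to $A$ over $k \to R'$, using centrality of $A$) with faithfully flat descent to show $\chi_{_{\cL,R}}: R \to \Ctd_R(\cL)$ is an isomorphism, finishing with Lemma \ref{perfect} for the equality $\Ctd_R(\cL) = \Ctd_k(\cL)$. The only difference is one of exposition: the paper delegates the descent steps to \cite[Lemmas 4.6.1--4.6.3]{GP2} while you spell them out, and the flatness of $k \to R$ that you rightly flag as implicitly needed is indeed assumed explicitly only later, in Theorem \ref{main}.
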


\begin{proof}
(1) Since perfectness is preserved by base change $A_{R^\pr}$ is
perfect. A straightforward faithfully flat descent argument
\cite[Lemma 4.6.1]{GP2} yields that $\cL$ is perfect. The rest of
(1) now follows from Lemma \ref{perfect}.

(2) and (3) By Lemma \ref{centroid} the canonical map $ R^\pr \to
\,\Ctd_{R^\pr}(A\otimes_k R^\pr)$ is an isomorphism. By reasoning as
in \cite[Lemma 4.6.2,3]{GP2} we see that (2) holds, and also that
the canonical map $\chi_{_{\cL, R}} : R \to \Ctd_R(\cL)$ is an
isomorphism. Now (3) follows from (1). \qed
\medskip

Our next objective is to show that for a large class of twisted
forms (that include multiloop algebras) the Lie algebra homomorphism
$\eta_{\cL} $ defined in (\ref{natural}) admits a {\it natural}
section.  One of the crucial assumptions is that the faithfully flat
trivializing base change $R \to R'$ be \'etale.

\section{Derivations of certain algebras defined by \'etale
  descent}\label{sdescent}

Assume that $R \to R'$  is a faithfully flat and \'etale morphism in
$k$-alg. Let $d\in \,\Der_k(R).$  We view $d$ naturally as an
element of $\Der_k(R,R^\pr)$ via $R\to R^\pr.$ Since $R\to R^\pr$ is
faithfully flat we can (and henceforth will) naturally identify $R$
with a $k$-subalgebra of $R^\pr.$ After this identification the
assumption that $R\to R^\pr$ is \'etale yields the existence of a
unique $d^\pr \in\,\Der_k(R^\pr)$ extending $d$ \cite{EGAIV} Cor. 20.5.8. Similarly $d^\pr$
extends to two derivations of $R^{\pr\pr} = R^\pr \otimes_R R^\pr$
via the two morphisms $p_i:R^\pr {_\rightarrow\atop^\rightarrow}
R^{\pr\pr}.$ If we denote these by $d^{\pr\pr}_1$ and
$d^{\pr\pr}_2,$ then both $d^{\pr\pr}_1$ and $d^{\pr\pr}_2$ extend
$d$ under $R\to R^\pr {_\rightarrow\atop^\rightarrow} R^{\pr\pr}.$
Since these two composite maps coincide and the resulting map $R\to
R^{\pr\pr}$ is \'etale, it follows that $d^{\pr\pr}_1 = d^{\pr\pr} =
d^{\pr\pr}_2$ for some unique $d^{\pr\pr}\in\,\Der_k(R^{\pr\pr}).$
In particular $d^\pr(s)\otimes 1 = d^{\pr\pr}(s\otimes 1)$ and
$1\otimes d^\pr(s) =d^{\pr\pr}(1\otimes s)$ for all $s\in R^\pr.$
That is
\begin{equation}\label{useful1}
p_i \circ d^{\pr} = d^{\pr \pr} \circ p_i.
\end{equation} Let $R^{\pr\pr}_0 = \{s\in R^{\pr\pr} : d^{\pr\pr}(s) =
0\; \text{for all}\; d\in \,\Der_k(R)\}.$  It is clear that
$R^{\pr\pr}_0$ is a $k$-subalgebra of $R^{\pr\pr}.$ This yields a
group homomorphism $\autfun(A)(R^{\pr\pr}_0) \to
\autfun(A)(R^{\pr\pr})$ for any $k$-algebra $A.$ An element $ u \in
\autfun(A)(R^{\pr\pr})$ is in the image of this map if and only if
$u(A \otimes 1 \otimes 1) \subset A \otimes_k R^{\pr \pr}_0.$
\end{proof}

\begin{theorem}\label{main}
Let $A$ be a  $k$-algebra which is finitely presented as a
$k$-module. Let $R\to R^\pr$ be a faithfully flat and \'etale
extension in $k$-alg with $k \to R$ flat. Consider the twisted form
$\cL_u$  of $A_R$ corresponding to a cocycle $u \in
\autfun(A)(R^{\pr\pr}).$  Assume that the following two conditions
hold.
\smallskip

${\rm (i)}$ The canonical map $\chi_{\cL} : R \to \Ctd_k(\cL_u)$ is
an isomorphism.\footnote{For example if $A$ is perfect and central.}

${\rm (ii)}$ The cocycle $u$ belongs to the image of $\autfun
(A)(R^{\pr\pr}_0)$ in $\autfun (A)(R^{\pr\pr}).$
\smallskip

\noindent Then the Lie algebra homomorphism
$$
\eta_{\cL_u}  :\,\Der_k(\cL_u) \to \,\Der_k(R)
$$
described in {\rm  (\ref{natural})}admits a natural section $\rho.$
Furthermore
$$
\Der_k(\cL_u) =\,\Der_R(\cL_u)\rtimes \rho\big(\Der_k(R)\big).
$$
In particular, if $k \to R$ is \'etale then $\Der_k(\cL_u)
=\,\Der_R(\cL_u).$
\end{theorem}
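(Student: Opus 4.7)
The plan is to construct the section $\rho$ explicitly by lifting derivations through the twisted form construction. Given $d \in \Der_k(R)$, the \'etaleness of $R \to R^\pr$ provides unique extensions $d^\pr \in \Der_k(R^\pr)$ and $d^{\pr\pr} \in \Der_k(R^{\pr\pr})$ satisfying the compatibility (\ref{useful1}). A routine verification shows that $\id_A \otimes d^\pr$ is a $k$-derivation of the $k$-algebra $A \otimes_k R^\pr$, and the candidate for $\rho(d)$ is its restriction to $\cL_u$.

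The key technical step is to show that $\id_A \otimes d^\pr$ preserves $\cL_u = \{x \in A \otimes_k R^\pr : u p_1^A(x) = p_2^A(x)\}$. From (\ref{useful1}) one deduces $p_i^A \circ (\id_A \otimes d^\pr) = (\id_A \otimes d^{\pr\pr}) \circ p_i^A$, so preservation reduces to the commutation of $u$ with $\id_A \otimes d^{\pr\pr}$ as endomorphisms of $A \otimes_k R^{\pr\pr}$. This is precisely where hypothesis (ii) enters: since $u$ lies in the image of $\autfun(A)(R^{\pr\pr}_0)$, one has $u(a \otimes 1) = \sum_i a_i \otimes s_i$ with $s_i \in R^{\pr\pr}_0$. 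The $R^{\pr\pr}$-linearity of $u$ then gives $u(a \otimes s) = \sum_i a_i \otimes s s_i$, and applying the Leibniz rule together with $d^{\pr\pr}(s_i) = 0$ yields $(\id_A \otimes d^{\pr\pr}) \circ u = u \circ (\id_A \otimes d^{\pr\pr})$. This is the step that requires genuine input from the hypothesis; everything else is formal.

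With $\rho(d) \in \Der_k(\cL_u)$ defined as the restriction, the identity $\eta_{\cL_u}(\rho(d)) = d$ follows from (\ref{meta}) by a direct Leibniz computation: for $r \in R$ and $x \in \cL_u \subset A \otimes_k R^\pr$ one has $[\id_A \otimes d^\pr, \chi_r](x) = (\id_A \otimes d^\pr)(rx) - r(\id_A \otimes d^\pr)(x) = d(r)\cdot x = \chi_{d(r)}(x)$. The uniqueness of the \'etale extension $d \mapsto d^\pr$ immediately implies $k$-linearity and preservation of the Lie bracket, since both $[d_1,d_2]^\pr$ and $[d_1^\pr, d_2^\pr]$ are $k$-derivations of $R^\pr$ extending $[d_1,d_2]$; hence $\rho$ is a $k$-Lie algebra section of $\eta_{\cL_u}$.

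Finally, the kernel of $\eta_{\cL_u}$ consists of those $\delta \in \Der_k(\cL_u)$ commuting with every $\chi_r$, which under the isomorphism $R \simeq \Ctd_k(\cL_u)$ of hypothesis (i) is exactly $\Der_R(\cL_u)$. The existence of the section $\rho$ splits the resulting short exact sequence of $k$-Lie algebras, yielding $\Der_k(\cL_u) = \Der_R(\cL_u) \rtimes \rho(\Der_k(R))$. For the last assertion, $k \to R$ \'etale forces $\Omega_{R/k} = 0$ and hence $\Der_k(R) = 0$, whereupon the decomposition collapses to $\Der_k(\cL_u) = \Der_R(\cL_u)$.
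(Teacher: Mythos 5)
Your proposal is correct and follows essentially the same route as the paper: extend $d$ uniquely to $d'$ and $d''$ by \'etaleness, use hypothesis (ii) to show $\id_A\otimes d''$ commutes with $u$, deduce that $\id_A\otimes d'$ stabilizes $\cL_u$, check the section property via the Leibniz rule and (\ref{meta}), and split off $\Der_R(\cL_u)=\ker(\eta_{\cL_u})$ using hypothesis (i). The only cosmetic differences are that you apply (ii) to $u$ where the paper conjugates by $u^{-1}$, and you spell out the bracket-compatibility of $\rho$ and the vanishing of $\Der_k(R)$ in the \'etale case, both of which the paper leaves implicit.
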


\begin{proof}
Let $d\in \,\Der_k(R).$  The map $\id_A\otimes d^{\pr\pr}:A\otimes_k
R^{\pr\pr}\to A\otimes_k R^{\pr\pr}$ is clearly a derivation of
$A\otimes_k R^{\pr\pr}$ as a $k$-algebra.  The key point is that
\begin{equation}\label{commutes}
\id_A\otimes d^{\pr\pr}\, \, \text{\it commutes with the action
of}\, \, u.
\end{equation}
Indeed, if $x\in A$ and we write $u^{-1}(x\otimes 1\otimes 1) =\sum
x_i\otimes s_i$ for some $x_i \in A$ and $s_i\in R^{\pr\pr}_0,$ then
using that $u$ is $R^{\pr\pr}$-linear and $d^{\pr\pr}(s_i)=0$ we see
that for all $s\in R^{\pr\pr}$ we have
$$
\begin{aligned}
u \circ (\id_A\otimes d^{\pr\pr})\circ u^{-1}(x\otimes s)
&= u \circ (\id_A\otimes d^{\pr\pr})\Big(\sum x_i\otimes s_is\Big)\\
&= u  \Big(\sum x_i\otimes s_i d^{\pr\pr}(s)\Big) \\
&= x\otimes d^{\pr\pr}(s) = (\id_A\otimes d^{\pr\pr})(x\otimes s).
\end{aligned}
$$
From (\ref{useful1}) we obtain
\begin{equation}\label{useful2}
p^A_i\circ (\id_A\otimes d^\pr) = (\id_A\otimes d^{\pr\pr})\circ
p^A_i .
\end{equation}
From (\ref{commutes}) and (\ref{useful2}) it follows that
$\id_A\otimes d^\pr \in \,\Der_k(A\otimes_k R^\pr)$ stabilizes
$\cL_u.$  Indeed if $y\in \cL_u$ then
$$
\begin{aligned}
up^A_1\big((\id_A\otimes d^\pr)(y)\big) &= (u\circ (\id_A\otimes
d^{\pr\pr})\circ
p^A_1)(y) = \big((\id_A\otimes d^{\pr\pr})\circ u \circ p^A_1\big)(y)\\
&= (\id_A\otimes d^{\pr\pr})\big(p^A_2(y)\big) = p^A_2
\big((\id_A\otimes d^\pr)(y)\big).
\end{aligned}
$$
This shows that there exists a $k$-linear map $\rho: \,\Der_k(R)\to
\,\Der_k(\cL_u) $  given by
\begin{equation}\label{rho}
\rho:d \mapsto (\id_A\otimes d^\pr) \vert  _{\cL_u}.
\end{equation}
It is clear that $\rho$ is a Lie algebra homomorphism.  To verify
that $\rho$ is a section of $\eta = \eta_{\cL_u}  $ in
(\ref{natural}) we observe that for all $d \in \Der_k(R)$, $y = \sum
x_i \otimes s_i \in \cL_u$ and $r \in R$ we have
$$
[\id_A\otimes d^\pr , \chi  _r ]\Big(\sum x_i\otimes s_i\Big) =\sum
x_i\otimes d^\pr(r)s_i = d^\pr(r)y = d(r)y = \chi_{d(r)}(y).
$$
According to (\ref{meta}) this shows that $\eta  \big(\rho(d)\big) =
d$ as desired.

Since by assumption $\Ctd_k(\cL_u)$ consists of the homotheties
$\chi _r$ for $r\in R,$ it is clear from the definition of $\eta  $
that $\ker(\eta)  =\,\Der_R(\cL_u).$ This shows that $\Der_R(\cL_u)$
is an ideal of $\Der_k(\cL_u).$ If moreover $\delta \in\,\Der_k
(\cL_u),$ then $\delta -\rho\big(\eta (\delta )\big) \in
\,\ker(\eta) .$ Thus $\Der_k(\cL_u) =\,\Der_R(\cL_u) +
\rho\big(\Der_k(R)\big).$ That this sum is direct is easy to see.
This completes the proof of our result.\qed
\end{proof}
\medskip

Before making a few relevant observations and remarks pertaining to
this last result, we should point out that the assumption made on
the cocycle $u$ is quite restrictive and certainly not necessary for
the thesis of the Theorem to hold. One is indeed fortunate that many
interesting cases fall under this assumption.

\begin{remark}\label{rcanonical} {\rm
  It
is important to observe that the definition of the section $\rho$
given by the Theorem, and the resulting identification of
$\Der_k(R)$ with a subalgebra of $\Der_k(\cL_u),$  are completely
natural and explicit: $d \in \Der_k(R)$ extends uniquely to a
derivation $d' \in \Der_k(R').$ Then $\id_A \otimes d'$ is a
derivation of the $k$-algebra $A \otimes_k R'$ and $\rho(d)$ is
nothing but the restriction of $\id_A\otimes d'$ to $\cL_u \subset A
\otimes_k R'.$

Note also that the isomorphism $R \simeq \Ctd_k (\cL_u)$ is given by
restricting the scalar action of $R$ in $A \otimes_k R'$ to $\cL_u.$
With this identification $\Ctd_k(\cL_u) = \{\chi_r : r \in R \}$,
and the natural action of $\Der_k(R)$ on $\Ctd_k (\cL_u)$ is then
given by $d(\chi_r) = \chi_{d(r)}.$}
\end{remark}

\begin{remark}\label{constant} {\rm The most natural type of twisted forms to which the Theorem applies
are those given by {\it constant} cocycles, namely when $u$ belongs
to the image of the map $\autfun(A)(k) \to \autfun(A)(R^{\pr \pr}).$
This is the case of multiloop algebras, as we will see in the next
section. }
\end{remark}

\begin{remark}\label{rcanonicalbis} {\rm Let $g \in \autfun(A)(R'),$ and set $v = p_2(g)up_1(g)^{-1}.$
Since $\cL_u \simeq \cL_v$ as $R$-algebras, the Lie algebra
homomorphism $\eta_{\cL_v} : \Der_k(\cL_v) \to \Der_k(R)$
corresponding to $\cL_v$ admits a section, but the cocycle $v$ need
not satisfy the assumption of the Theorem. An easy calculation shows
that
$$
\Der_k(\cL_u) = {\Der}_R(\cL_u) \rtimes \rho_g\big(\Der_k(R) \big)
$$
where  $\rho_g$ is given by $\rho_g(d) = g(\id_A \otimes d')g^{-1}$
for all $d \in \Der_k(R).$}
\end{remark}

\begin{example} \label{Lie}
{\rm Assume that $k$  is a field of characteristic $0,$ and that $A$
is a finite dimensional central simple Lie algebra over $k.$  To
abide by standard notational conventions we will denote $A$ by
${\fg}.$

Every derivation of the $R$-Lie algebra ${\fg}_R$ is inner. This
follows from theorem 1.1 of \cite{BM}, and also by the following
direct reasoning: If $\delta \in \,\Der_R({\fg}_R)$ we may view the
restriction of $\delta$  to $\fg$ as a derivation $\delta_{\fg}$ of
$\fg$
 with values in $\fg_R$ (via the adjoint representation). Since $\fg$ is finite dimensional
 there exists a finite dimensional submodule $M$ of $\fg_R$ such that $\delta_{\fg}$ takes values in $M.$
 By Whitehead's lemma there exists $x \in M \subset {\fg}_R$
such that $\delta_{\fg}  (y) = [x,y].$  This shows that $\delta  $
and $\ad_{\fg_R} (x)$ agree on ${\fg}\otimes 1.$ By $R$-linearity
$\delta = \ad_{\fg_R} (x)$.

Assume now that $\cL $ is a twisted form of ${\fg}_R.$ Choose a
faithfully flat base change $R \to R'$ that trivializes $\cL.$ Since
$\cL \otimes_R R' \simeq \fg \otimes_k R'$ and every derivation of
$\fg \otimes_k R'$ is inner, we conclude that $\big(\Der_R(\cL)/{\rm
IDer}(\cL)\big) \otimes_R R' = 0.$  By faithfully flat descent ${\rm
IDer}(\cL) = \Der_R(\cL).$

Finally, if $\cL =\cL_u$ for a cocycle $u$ as in  Theorem \ref{main} then
$$
\Der_k(\cL_u) = {\rm I\Der}(\cL_u) \rtimes \rho\big(\Der_k(R)\big).
$$
}
\end{example}

\begin{example}\label{RR} {\rm Assume $R = k[t]$ where $k$ is a
field. Let $A$ be an algebra as in  Theorem \ref{main}, and assume
that the connected component of the identity of the algebraic group
$\autfun(A)$ is reductive. Let $\cL$ be a twisted form of $A_R$
which is trivialized by the base change $k[t] \to k_s[t],$ where
$k_s$ is the separable closure of $k.$\footnote{By a theorem of
Steinberg this assumption is superfluous if $k$ is of characteristic
$0.$} From the work of Raghunathan and Ramanathan \cite{RR} we know
that the natural map $H_{\et }^1\big(k, \autfun(A)\big) \to
H_{\et}^1\big(k[t], \autfun(A)\big)$ is bijective. Thus  $\cL \simeq
\cL_u$ as $R$-algebras (a fortiori also as $k$-algebras) for some
constant cocycle $u.$ We have
$$
\Der_k(\cL) \simeq {\Der}_k(\cL_u) = {\Der}_R(\cL_u) \rtimes
\rho\big(R \frac{d}{dt} \big).
$$
The action of $R \frac{d}{dt} = \Der_k(R)$ on $\cL,$ however, is not explicit.
Of course if $\cL$ is given to us in the form $\cL =\cL_v,$ then we
can apply the considerations described in Remark
\ref{rcanonicalbis}.

For the Laurent polynomial ring $k[t^{\pm 1}]$ the situation is much
more delicate. The natural map $H_{\et}^1\big(k[t^{\pm 1}],
\autfun(A)\big) \to H_{\et}^1\big(k((t)), \autfun(A)\big)$ is
bijective whenever $\autfun(A)$ is reductive and the characteristic
of $k$ is good \cite{CGP}. If, for example, the image of the class
of $u$ in $H_{\et}^1\big(k((t)), \autfun(A)\big)$ is constant (a
problem that in theory can be studied by Bruhat-Tits methods), then
Theorem \ref{main} can be applied.}
\end{example}

\section{The Galois case. Applications to multiloop
algebras}\label{smultiloop}
 Throughout this section $k$ is assumed to be a field, and $A$ will denote a $k$-algebra that satisfies
 assumption (i) of  Theorem \ref{main}.\footnote{For example $A$ finite dimensional and central simple.}
 In this situation the canonical maps $\autfun
 (A)(k) \to \autfun (A)(R^{\pr\pr}_0) \to \autfun (A)(R^{\pr\pr})$
 are all injective, and we identify the first two groups with their
 respective images. We denote $\autfun(A)(k)$ by ${\rm Aut}_k(A).$

 Assume  that our form $\cL_u$ is trivialized by a (finite) Galois
extension $R^\pr$ of $R$  (see \cite{KO}, \cite{Wth} or, ultimately
and inevitably,
 \cite{SGA1}). Recall then that $R \to R'$ is faithfully flat, and that if $\Gamma \subset {\rm
Aut}_R(R^\pr)$ denotes the Galois group of the extension then the
map
$$
R^\pr \otimes _R R^\pr \to R^\pr \times \dots \times R^\pr \q (\vert
\Gamma  \vert  \text{\rm -times)}
$$
given by
$$
 a\otimes b \mapsto \big(\gamma  (a)b\big)_{\gamma
\in \Gamma }
$$
is an isomorphism of $R'$-algebras (with $R'$ acting by
multiplication on the second component of $R' \otimes_R R'$). Under
the resulting identification of $\autfun (A)(R^{\pr\pr})$ with $
\autfun (A)(R^\pr)\times\dots \times \,\autfun(A)(R^\pr)$ our
cocycle $u$ corresponds to a $\vert \Gamma  \vert  $-tuple
$(u_\gamma )_{\gamma \in\Gamma  }$ which satisfies the usual cocycle
condition $u_{\gamma \mu}   = u_\gamma \,^\gamma u_\mu,  $ with
$\Gamma  $ acting on $\autfun(A)(R^\pr)$ in the natural way.

\begin{remark}\label{cocycles} {\rm At the
level of Galois cocycles the assumption that $u$ be an element of
$\autfun (A)(R^{\pr\pr}_0)$ translates into the following condition:
For all $\gamma \in \Gamma$ we have $u_\gamma \in
\autfun(A)(R^\pr_0)$ where
 $R^\pr_0 =\{s\in R^\pr :d^\pr(s) =0
\,\, \text{for all} \,\,  d \in \,\Der_k(R)\}.$

Note that this condition is automatically satisfied whenever the
$u_\gamma $ are obtained from automorphisms of the $k$-algebra $A,$
i.e. $u_\gamma  =v_\gamma  \otimes 1$ for some $v_\gamma \in {\rm
Aut}_k(A).$ The action of $\Gamma  $ is in this case trivial, and
the cocycle condition simply states that $\gamma \mapsto v_\gamma $
is a group homomorphism from $\Gamma  $ to ${\rm Aut}_k(A).$  This
situation arises in the case of multiloop algebras, as we now
explain.}
\end{remark}

We will assume henceforth that $R= k[t^{\pm 1}_1,\dots,t^{\pm
1}_n].$ Fix an $n$-tuple ${\bf m} = (m_1,\dots,m_n)$ of positive
integers, and set $R_{\bf m} = R^\pr = k[t^{\pm
\frac{1}{m_1}}_1,\dots,t^{\pm \frac{1}{m_n}}_n].$ We assume in what
follows that the $m_i$ are relatively prime to the characteristic of
$k.$ The natural map $R\to R^\pr$ is then faithfully flat and
\'etale.

Let $m= \Pi_{1 \leq i \leq n} m_i.$  Assume $k$ contains a primitive
$m$-th root of unity $\xi_m ,$ and set  $\xi _{m_i} = \xi_m
^{\Pi_{j\ne i}m_j}.$ Then $R\to R^\pr$ is Galois with Galois group
$\Gamma = \Z/m_1\Z \times \cdots \times \Z/m_n\Z,$ where for each
 ${\bf e} = (e_1,\dots ,e_n)\in \Z^n$ the corresponding element $\ol{\bf e} = (\ol e_1,\cdots,\ol e_n) \in \Gamma  $ acts on $R'$
via
$$
^{\ol {\bf e}} t^{\frac{1}{m_i}}_i = \xi  ^{e_i}_{m_i}
t^{\frac{1}{m_i}}_i.
$$

Multiloop algebras arise under these assumptions by considering an
$n$-tuple $\bsig = (\sigma  _1,\dots,\sigma  _n)$ of commuting
elements of ${\rm Aut}_k(A)$ satisfying $\sigma  ^{m_i}_i = 1.$  For
each $n$-tuple $(i_1,\dots ,i_n)\in \Z^n$ we consider the
simultaneous eigenspace
$$
A_{i_1 \dots i_n} =\{x\in A:\sigma  _j(x) = \xi  ^{i_j}_{m_j} x \,
\, \text{\rm for all} \,\, 1\le j\le n\}.
$$
Then $A = \sum A_{i_1 \dots i_n}, $ and $A = \bigoplus A_{i_1 \dots
i_n}$ if we restrict the sum to those $n$-tuples $(i_1,\dots ,i_n)$
for which  $0 \leq i_j < m_j.$

The map $\Gamma \to {\rm Aut}_k(A)$ given by
$$
\ol{\bf e} = (\bar e_1,\dots,\bar e_n) \mapsto \sigma ^{-e_1}_1\dots
\sigma ^{-e_n}_n = v_{\bar {\bf e}}
$$
is a group homomorphism whose corresponding cocycle $u=(u_{\ol{\bf
e}})_{\ol{\bf e}\in \Gamma  }$ with $u_{\ol{\bf e}} = v_{\ol{\bf e}}
\otimes 1$ is constant (see Remark \ref{constant}), hence satisfies
assumption (ii) of Theorem \ref{main}. The corresponding form
$\cL_u$ is the {\it multiloop algebra} commonly denoted by
$L(A,\bsig),$
$$
\cL_u = L(A,\bsig) = \us{(i_1,\dots ,i_n)\in \Z^n}\oplus\,
A_{i_1\dots i_n} \otimes t^{\frac{i_1}{m_1}} \dots
t^{\frac{i_n}{m_n}}_n \subset A\otimes _k R_{\bf m}.
$$
We have
 \begin{equation}\label{multiloopder}
 \Der_k \big(L(A,\bsig)\big) = \Der_R\big(L(A,\bsig)\big)
\rtimes \rho\big(\Der_k (R)\big). \end{equation}

 We have
$\Der_k(R)  =  R\frac{\partial}{\partial t_1} \oplus .... \oplus R\frac{\partial}{\partial t_n},$ and  the (unique) way in which the elements of
$\Der_k(R)$ extend to $\Der_k(R_{\bf m})$ is clear. The
explicit action of $\Der_k (R)$ on $L(A,\bsig)$ and on
$\Ctd_k\big(L(A, \bsig)\big)$ is now as described in Remark
\ref{rcanonical}.

Finally, if $k$ is algebraically closed  of characteristic $0$ and
$A=\fg$ is a simple finite dimensional Lie algebra,  then the
multiloop algebras $L(\fg,\bsig)$ arise naturally in modern infinite
dimensional Lie theory as we have explained in the Introduction (for
example, if $n=1,$ the $L(\fg,\bsig )$ are the derived algebras of
the affine Kac-Moody Lie algebras, modulo their centres). By Remark
\ref{Lie}
\begin{equation}\label{Liemultiloopder}
\Der_k\big(L(\fg,\bsig)\big) = {\rm IDer}\big(L(\fg,\bsig
)\big)\rtimes \rho\big(\Der_k (R)\big).
\end{equation}

\begin{remark}
{\rm The analogue of Theorem \ref{main} for automorphisms instead of
derivations {\it fails}.  We do have an exact sequence
$$
1 \to \,\Aut_R(\cL_u) \to \,\Aut_k (\cL_u) \os \eta  \to \,\Aut_k(R)
$$
for all cocycles $u.$  The homomorphism $\eta  $ need not be
surjective, and even when it is, and if $u$ satisfies the assumption
of the Theorem, the resulting exact sequence need not split.  This
situation takes place, for example, when  $A= M_{2\times 2}(\C), $
$R = \C [t^{\pm1}_1,t^{\pm1}_2],$ and $\cL_u$ is the standard
quaternion algebra over $R$ (see {\rm \cite{GP2}} example 4.11)}.
\end{remark}
\medskip

\noindent {\bf Acknowledgement}  I would like to express my gratitude to Erhard Neher for his many useful  comments.

\end{document}